\documentclass[reqno]{amsart}

\usepackage[T1]{fontenc}
\usepackage[utf8]{inputenc}
\usepackage{lmodern}

\usepackage[english]{babel}
\usepackage{amsmath,amssymb,amsthm,mathtools}
\usepackage{graphicx}
\usepackage{color}
\usepackage{hyperref}

\numberwithin{equation}{section}

\hypersetup{
	colorlinks=true,
	linkcolor=blue,
	citecolor=blue,
	urlcolor=blue,
	pdftitle={Broadcasting on Random Trees},
	pdfauthor={Tayfun Kardas, Vsevolod Shneer and Vitali Wachtel}
}

\theoremstyle{plain}
\newtheorem{theo}{Theorem}[section]

\newtheorem{lema}[theo]{Lemma}
\newtheorem{coro}[theo]{Corollary}

\theoremstyle{definition}
\newtheorem{defn}[theo]{Definition}

\theoremstyle{remark}

\newtheorem*{remark}{Remark}

\title[Broadcasting on Random Trees]
    {Broadcasting on Random Trees: Martingale Methods and Error Bounds}
    \thanks{
     The work of Tayfun Kardas was supported by the 
     Research Centre for Mathematical Modelling (RCM$^2$), Bielefeld University. 
     This financial support is gratefully acknowledged.
    }

\author[Kardas]{Tayfun Kardas}
\address{Faculty of Mathematics, Bielefeld University, Bielefeld, Germany}
\email{tkardas@math.uni-bielefeld.de}

\author[Shneer]{Vsevolod Shneer}
\address{Department of Actuarial Mathematics and Statistics, School of Mathematical and Computer Sciences, Heriot-Watt University, Edinburgh, UK}
\email{V.Shneer@hw.ac.uk}

\author[Wachtel]{Vitali Wachtel}
\address{Faculty of Mathematics, Bielefeld University, Bielefeld, Germany}
\email{wachtel@math.uni-bielefeld.de}

\keywords{Broadcasting on trees, random recursive trees, majority rule, martingales, reconstruction}

\subjclass[2020]{Primary 60K35; Secondary 60G42, 60J10}

\begin{document}

\begin{abstract}
The broadcasting problem on trees concerns the transmission of binary information along the edges of a tree. In the formulation studied here, each vertex carries one of two colours, and along every edge the colour is transmitted with error probability $q$.
The central question is whether the root colour can be reconstructed from the observed colours of all vertices in the tree, but crucially, without knowing which vertex is the root.
We study the problem of reconstructing the root colour in a broadcasting process on
random trees, focusing on the majority rule estimator. We introduce a novel and adaptable
martingale approach for analysing the colour imbalance. This method provides a simpler
and more powerful alternative to existing techniques.
Our martingale approach unifies the analysis of the different tree models and leads to
linear upper bounds for the error probability of the majority rule estimator in each case.
This shows that, for a broad class of random tree models, the error probability of the
majority rule admits a linear upper bound in $q$.
\end{abstract}

\maketitle

\section{Introduction}

The broadcasting problem on trees concerns the transmission of a binary signal from the root along the edges of a tree, where each edge independently flips the transmitted bit with probability \(q\). The central question is whether the signal value at the root can be reconstructed from the observed tree with signal bits
(colours) assigned to every vertex. In the present paper, the underlying tree is a randomly growing recursive tree, and the location of the root is hidden from the observer. Thus, after the tree has been generated and coloured, one observes the entire tree and the colours of all vertices, and the goal is to reconstruct the colour of the root from this observation.


Let us first introduce the class of random trees we are going to study.
\begin{defn}[Unified recursive tree]\label{def:unified_recursive_tree}
	A {\it unified recursive tree} is a random recursive sequence of trees \((T_n)_{n\geq 1}\), where \(T_n\) has vertex set \([n]:=\{1,\dots,n\}\), $1$ denotes the root, and the probability that the vertex $n+1$ will be attached to a vertex $v$ is given by
	\[
	\mathbb P(n+1\sim v\mid T_n)=\frac{\alpha \deg_n^+(v)+\beta\,\mathbf 1_{\{v\neq 1\}}+1}{(\alpha+\beta)(n-1)+n},
	\qquad v\in[n],
	\]
	where \(\deg_n^+(v)\) denotes the out-degree of \(v\) in \(T_n\).
\end{defn}
We note here that when a new vertex attaches, we always think of the new edge as starting at an existing vertex (chosen according to the probabilities above) and ending at the new vertex. Hence, all vertices in the tree, apart from the root, have in-degree $1$ and the root has in-degree $0$. The out-degree of a vertex may take arbitrary  values.

Let us now describe the set of admissible values of parameters $\alpha$ and $\beta$. First, the denominator in the attachment probability should be positive. Therefore, we need to impose the condition $\alpha+\beta\ge-1$. If the out-degree of the root is $d_1$ then the numerator is equal to $\alpha d_1+1$. Here we have two possibilities: either $\alpha\in[0,\infty)$ or $\alpha=-\frac{1}{D_1}$ for some $D_1\in\{1,2,\ldots\}$. In the latter case one can attach at most $D_1$ vertices to the root. Let $d_2$ be the out-degree of a non-root vertex $v$. Then the numerator is equal to $\alpha d_2+\beta+1$. We have two possibilities to keep this number non-negative: $\alpha\in[0,\infty)$, $\beta\in[-1,\infty)$ or $\beta=-1-\alpha D_2$ for some $D_2\in\{1,2,\ldots\}$. Combining this with the restriction $\alpha+\beta\ge -1$, we conclude that the set of admissible values is given by
$$
\mathcal{D}:=\{(\alpha,\beta); \alpha\ge0,\beta\ge-1\text{ and }\alpha+\beta>-1\}
\cup\left\{\left(-\frac{1}{D_1},\frac{D_2}{D_1}-1\right);
D_1\ge1, D_2\ge 2\right\}.
$$

We have excluded all values $\alpha$ and $\beta$ such that $\alpha+\beta+1=0$, since we will need the condition $\alpha+\beta+1>0$ in our analysis.
Choosing \(\beta=0\) and \(\beta=\alpha\)
we get very simple increasing tree and shape exchangeable tree, respectively. These two classes have been considered in \cite{hartung2024random}.

Given the tree growth model, we now introduce the corresponding broadcasting process.
\begin{defn}[Broadcasting process]
	Let \((T_n)_{n\geq 1}\) be the tree growth model above. Each vertex carries one of two colours, which we identify with the values \(-1\) and \(1\). The root vertex \(1\) is assigned a random bit \(B_1\in\{-1,1\}\), chosen uniformly at random. For each \(n\geq 1\), once the new vertex \(n+1\) has chosen its parent \(p_{n+1}\), it inherits the bit of its parent with probability \(1-q\) and flips it with probability \(q\), independently of the tree growth and all previous broadcasting steps. Equivalently,
	\[
	B_{n+1}=B_{p_{n+1}}Z_{n+1},
	\]
	where \((Z_n)_{n\geq 2}\) are independent \(\{-1,1\}\)-valued random variables satisfying
	\[
	\mathbb P(Z_n=-1)=q,
	\qquad
	\mathbb P(Z_n=1)=1-q.
	\]
	The parameter \(q\in[0,1]\) is called the bit-flip probability.
\end{defn}
Although the broadcasting process is defined on the labelled recursive tree \(T_n\), only
the corresponding coloured unlabelled tree is observed. The reconstruction problem is
therefore to estimate the root bit \(B_1\) from this observation. In this paper, we study
the majority rule estimator.
\begin{defn}[Majority rule]
	Let \((T_n,(B_v)_{v\in[n]})\) be the broadcast-coloured tree of size \(n\), where
	\(B_v\in\{-1,1\}\) denotes the colour of vertex \(v\).
	The {\it majority rule} estimator is the \(\{-1,1\}\)-valued random variable
	\[
	\widehat B_{\mathrm{maj},n}
	:=
	\begin{cases}
		1, & \sum_{v=1}^n B_v > 0,\\
		-1, & \sum_{v=1}^n B_v < 0,\\
		\xi_n, & \sum_{v=1}^n B_v = 0,
	\end{cases}
	\]
	where \(\xi_n\) is independent of the broadcasting process and satisfies
	\[
	\mathbb P(\xi_n=1)=\mathbb P(\xi_n=-1)=\frac12.
	\]
	Its error probability is defined by
	\[
	R_{\mathrm{maj}}(n,q):=\mathbb P\!\left(\widehat B_{\mathrm{maj},n}\neq B_1\right),
	\qquad
	R_{\mathrm{maj}}(q):=\limsup_{n\to\infty} R_{\mathrm{maj}}(n,q).
	\]
\end{defn}

The broadcasting problem on random recursive trees has been introduced and studied by Addario-Berry, Devroye, Lugosi and Velona \cite{addario2022broadcasting}, who have analysed the quality of the majority rule for uniform recursive trees ($\alpha=\beta=0$) and for the linear preferential model (this corresponds to our model with $\alpha>0$, $\beta=0$) of random trees. Their proofs are based on a rather careful analysis of underlying trees, and they are different in two models considered there.

More recently, Althaus, Hartung and Steiner \cite{hartung2024random} have suggested a supermartingale approach
to the broadcasting problem, which is in some sense 'nonparametric': it does not depend on a particular growing mechanism of the tree. They have shown that
$R_{\mathrm{maj}}(q)=O(\sqrt{q})$, which is weaker than the linear in $q$ bounds obtained in \cite{addario2022broadcasting}.

The purpose of the present paper is to develop a martingale approach that yields linear upper bounds for the error probability of the majority rule for unified recursive trees.

	The statement below is our main result.
\begin{theo}\label{thm:main}
	Assume that $q<1/2$ and that an admissible pair
    $(\alpha,\beta)$ satisfies 
    $$
    a:=1-\frac{2q(\beta+1)}{\alpha+\beta+1}>\frac{1}{2}.
    $$
    Then
	\begin{align}\label{eq:main_bound}
	R_{\mathrm{maj}}(q)\le \frac{\Gamma^2\left(\frac{1}{\alpha+\beta+1}+a\right)}{\Gamma\left(\frac{1}{\alpha+\beta+1}\right)\Gamma\left(\frac{1}{\alpha+\beta+1}+2a\right)}\left(\alpha+1+\frac{\beta+1}{2a-1}\right)-1.
	\end{align}
\end{theo}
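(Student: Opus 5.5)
The bound in \eqref{eq:main_bound} has the form $\lim \mathbb E[X_n^2]/(\mathbb E X_n)^2-1$, which suggests a one-sided Chebyshev (Cantelli) argument. Set $X_n:=B_1\sum_{v=1}^n B_v$. Since $\widehat B_{\mathrm{maj},n}\neq B_1$ forces $X_n\le 0$, we have
\[
R_{\mathrm{maj}}(n,q)\le \mathbb P(X_n\le 0)\le \frac{\operatorname{Var}X_n}{\operatorname{Var}X_n+(\mathbb E X_n)^2}\le \frac{\mathbb E X_n^2}{(\mathbb E X_n)^2}-1 .
\]
So it suffices to show that $\mathbb E X_n^2/(\mathbb E X_n)^2$ converges to the Gamma expression times $\left(\alpha+1+\frac{\beta+1}{2a-1}\right)$. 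The plan is to obtain both moments from a martingale built on the attachment weights.

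\textbf{The weighted martingale.} Write $\kappa:=\alpha+\beta+1>0$ and $D_n:=\kappa n-(\alpha+\beta)$ for the denominator of the attachment rule. Define the weighted imbalance
\[
Y_n:=B_1\sum_{v\le n}\bigl(\alpha\deg_n^+(v)+\beta\mathbf 1_{\{v\neq1\}}+1\bigr)B_v .
\]
When vertex $n+1$ attaches to $p=p_{n+1}$, we get $Y_{n+1}=Y_n+B_1\bigl(\alpha B_p+(\beta+1)B_{n+1}\bigr)$. Two conditional expectations follow from the definitions:
\[
\mathbb E[B_1B_p\mid\mathcal F_n]=\frac{Y_n}{D_n},\qquad \mathbb E[B_1B_{n+1}\mid\mathcal F_n]=\frac{(1-2q)\,Y_n}{D_n}.
\]
Hence
\[
\mathbb E[Y_{n+1}\mid\mathcal F_n]=\Bigl(1+\frac{\alpha+(\beta+1)(1-2q)}{D_n}\Bigr)Y_n=\Bigl(1+\frac{a\kappa}{D_n}\Bigr)Y_n .
\]
It follows that $M_n:=Y_n/c_n$ is a martingale, where $c_n=\prod_{k<n}(1+a\kappa/D_k)$. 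This product is a ratio of Gamma functions, asymptotically $C\,n^{a}$ with $C$ involving $\Gamma(1/\kappa)$ and $\Gamma(1/\kappa+a)$.

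\textbf{Second moment.} Expand $Y_{n+1}^2$. The increment squared has conditional mean
\[
\alpha^2+(\beta+1)^2+2\alpha(\beta+1)(1-2q),
\]
which is a constant. The cross term gives $2a\kappa Y_n^2/D_n$. This yields the recursion
\[
\mathbb E Y_{n+1}^2=\Bigl(1+\frac{2a\kappa}{D_n}\Bigr)\mathbb E Y_n^2+\mathrm{const}.
\]
Solving it produces a product $\sim C' n^{2a}$, which brings in $\Gamma(1/\kappa+2a)$, plus a sum of terms $\asymp \sum k^{-2a}$. That sum converges precisely because $a>1/2$, and it is the source of the factor $\frac{\beta+1}{2a-1}$. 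Consequently $M_n$ is $L^2$-bounded, and $\mathbb E M_\infty^2/(\mathbb E M_\infty)^2$ is the Gamma ratio times $\left(\alpha+1+\frac{\beta+1}{2a-1}\right)$. Matching these constants exactly is routine bookkeeping: initial value $Y_1=1$, and the constant term, which should collapse using the identity $1-2q=(a\kappa-\alpha)/(\beta+1)$.

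\textbf{Transfer to $X_n$.} We have
\[
\mathbb E[X_{n+1}-X_n\mid\mathcal F_n]=(1-2q)\frac{Y_n}{D_n}.
\]
So $X_n=\sum_{k<n}(1-2q)Y_k/D_k+N_n$, where $N_n$ is a martingale with bounded increments, and hence $\mathbb E N_n^2=O(n)=o(n^{2a})$. Since $Y_k=c_kM_k$ with $c_k\sim Ck^a$, the compensator equals $\gamma\,c_nM_\infty+o(n^a)$ in $L^2$, with $\gamma=(1-2q)/(a\kappa)$. This uses $L^2$ convergence of $M_k$ together with a Cesàro/Toeplitz argument.

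Therefore $X_n/n^a\to \gamma C M_\infty$ in $L^2$, and the constant $\gamma C$ cancels in the ratio. This gives
\[
\lim_n \mathbb E X_n^2/(\mathbb E X_n)^2=\mathbb E M_\infty^2/(\mathbb E M_\infty)^2,
\]
and the theorem follows from the Cantelli bound above.

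I expect the main obstacle to be this transfer step: controlling the error in $L^2$ uniformly, so that $\mathbb E X_n^2/n^{2a}$ converges and not merely the first moment, while keeping all constants exact so that the Gamma ratio comes out as stated.
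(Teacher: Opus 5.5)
Your proposal is correct, and its core matches the paper. Your $Y_n$ is exactly the paper's $S(n)=(\beta+1)\Delta_1(n)+\alpha\Delta_2(n)-\beta$. Your $Y_n/c_n$ is the paper's martingale $M(n)$; your $c_n$ is the reciprocal of theirs. Your second-moment recursion, with constant $K$, is the paper's recursion for $\mathbb E[M^2(n+1)\mid\mathcal F_n]$.

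Where you differ is the final step. The paper proceeds as follows:
\begin{itemize}
\item it proves a martingale CLT for $L(n)=\Delta_1(n)-\gamma\Delta_2(n)$;
\item it writes $\Delta_1(n)=(\gamma(S(n)+\beta)+\alpha L(n))/\theta$ and splits on $\{|L(n)|\le A\sqrt n\}$;
\item it applies Chebyshev to $M(n)$ on $\{M(n)\le 2\theta\varepsilon/\gamma\}$, then sends $A\to\infty$ and $\varepsilon\to 0$.
\end{itemize}
You instead apply Cantelli directly to $\Delta_1(n)$ and use only $L^2$ information. This needs no CLT and no truncation parameters. Before your final weakening, Cantelli even gives the slightly sharper bound $R_{\mathrm{maj}}(q)\le 1-1/\mathbb E M_\infty^2$.

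The transfer step you flag as the main obstacle is in fact easy. Since $\mathbb E[X_{n+1}-X_n\mid\mathcal F_n]=(1-2q)Y_n/D_n$ and $\mathbb E[Y_{n+1}-Y_n\mid\mathcal F_n]=a\kappa Y_n/D_n$, the process $X_n-\frac{1-2q}{a\kappa}Y_n$ is itself a martingale with bounded increments. In the paper's notation it is $\frac{\alpha}{\theta}L(n)$ plus a constant. Hence $X_n=\frac{1-2q}{a\kappa}c_nM_n+O_{L^2}(\sqrt n)$ immediately. Your Toeplitz argument also works, because $c_{k+1}-c_k=a\kappa c_k/D_k$ gives $\sum_{k<n}c_k/D_k=(c_n-1)/(a\kappa)$ exactly.

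What you still have to carry out is the exact constant. You need
$\sum_{j\ge 2}\prod_{k<j}(1+2a\kappa/D_k)^{-1}=\frac{1}{\kappa(2a-1)}$.
The paper does this via a Beta integral; telescoping the Gamma ratios works too. Then the identity $K=\kappa(\beta+1-\alpha+2\alpha a)$ turns $1+\frac{K}{\kappa(2a-1)}$ into $\alpha+1+\frac{\beta+1}{2a-1}$, as required.
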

We note that the condition $a > 1/2$ is not restrictive. It is easy to see that it holds, in particular, for all $q<1/8$. Indeed, note first that the condition is equivalent to the requirement that $4q(\beta+1) < \alpha+\beta+1$. Note that for all admissible values of $\alpha$ and $\beta$, we have $\beta+1 \ge 0$. Hence, the condition is satisfied for $q<1/4$ for all $\alpha\ge 0$.  If $\alpha < 0$, we have that $\alpha = -1/D_1$ and $\beta = D_2/D_1 - 1$ for some $D_1\ge1$ and $D_2\ge 2$. Therefore, $4q(\beta+1) < \alpha+\beta+1$ is equivalent to
$$
4q\frac{D_2}{D_1} < \frac{D_2}{D_1}-\frac{1}{D_1}.
$$
The assumption $D_2\ge2$ implies that this condition is valid for all $q<1/8$.

It is easy to see that if we take $q=0$ ($a=1$) then the right hand side in \eqref{eq:main_bound} becomes zero. Furthermore, one can show that the right hand side grows linearly in $q$ for sufficiently small $q$. This fact implies that $R_{\mathrm{maj}}(q)\le C_{\alpha,\beta} q$ for all $q$.

Let us now implement this in the particular case of uniform recursive trees.
\begin{coro}\label{cor:main}
	For the uniform random recursive tree ($\alpha=\beta=0$)
    one has
	\[
	R_{\mathrm{maj}}(q)\le 
    \frac{\Gamma^2(2-2q)}{\Gamma(3-4q)}
    \left(1+\frac{1}{1-4q}\right)-1 \quad\text{for all } q\in[0,1/4).
	\]
    Moreover,
    $$
    R_{\mathrm{maj}}(q)
    \le 8\left(\frac{9\Gamma^2(3/4)}{4\sqrt{\pi}}-1\right)q
    \le 7.25 q \quad\text{for all } q\in[0,1/8)
    $$
    and
    $$
    R_{\mathrm{maj}}(q)\le 8 q \quad\text{for all } q\in[0,1].
    $$
\end{coro}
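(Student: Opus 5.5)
Corollary~\ref{cor:main} follows from Theorem~\ref{thm:main} by specialisation and elementary estimates. Set $\alpha=\beta=0$. Then $\alpha+\beta+1=1$ and $a=1-2q$, so $a>1/2$ exactly when $q<1/4$. Since $2a-1=1-4q$, the bound \eqref{eq:main_bound} becomes
\[
\frac{\Gamma^2(1+a)}{\Gamma(1)\Gamma(1+2a)}\Bigl(1+\frac{1}{2a-1}\Bigr)-1
=\frac{\Gamma^2(2-2q)}{\Gamma(3-4q)}\Bigl(1+\frac{1}{1-4q}\Bigr)-1 .
\]
This is the first inequality, valid for all $q\in[0,1/4)$.

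For the linear bound on $[0,1/8)$, write $f(q)$ for the right-hand side above, so that $f(0)=\Gamma(2)^2/\Gamma(3)\cdot 2-1=0$. The plan is to show that $f$ is convex on $[0,1/8]$; the chord bound then gives $f(q)\le 8f(1/8)\,q$ for $q\in[0,1/8]$. At $q=1/8$ we have
\[
\frac{\Gamma^2(7/4)}{\Gamma(5/2)}\cdot 3=\frac{3\cdot(9/16)\Gamma^2(3/4)}{(3/4)\sqrt{\pi}}=\frac{9\Gamma^2(3/4)}{4\sqrt\pi},
\]
using $\Gamma(7/4)=\tfrac34\Gamma(3/4)$ and $\Gamma(5/2)=\tfrac34\sqrt\pi$. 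This yields the stated constant, and numerically $\Gamma(3/4)\approx1.2254$ gives $8(1.9061-1)\approx7.25$.

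Convexity is the main obstacle. I would write $f+1=g\cdot h$ with $g(q)=\Gamma^2(2-2q)/\Gamma(3-4q)$ and $h(q)=(2-4q)/(1-4q)$. Both factors are positive, increasing and convex on $[0,1/8]$, and such a product is convex because $(gh)''=g''h+2g'h'+gh''\ge0$. For $h$ this is immediate, since $h=1+1/(1-4q)$. For $g$, note that $\log g=2\log\Gamma(2-2q)-\log\Gamma(3-4q)$ has derivative $4\bigl(\psi(3-4q)-\psi(2-2q)\bigr)\ge0$, because $\psi$ is increasing and $3-4q\ge2-2q$. Its second derivative is $8\psi'(2-2q)\cdot\tfrac12\cdot\ldots$; more precisely, $(\log g)''=8\psi'(2-2q)-16\psi'(3-4q)$. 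This need not have a fixed sign, so I would instead check $g''=g\bigl((\log g)''+((\log g)')^2\bigr)\ge0$ numerically/analytically on the short interval using monotonicity bounds for $\psi,\psi'$. If this becomes messy, a fallback is to bound $f'(q)\le f'(1/8)$ or to bound $f$ directly on a few subintervals.

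For the bound $R_{\mathrm{maj}}(q)\le 8q$ on all of $[0,1]$, I would split into cases. On $[0,1/8)$ it follows from the previous bound, since $7.25<8$. For $q\ge1/8$ we have $8q\ge1\ge R_{\mathrm{maj}}(q)$ trivially, since the error is a probability.
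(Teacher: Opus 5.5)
\textbf{The convexity step is missing.} Your specialisation of Theorem~\ref{thm:main} to $\alpha=\beta=0$ is correct, as are the evaluation of $f(1/8)$, the chord argument from $f(0)=0$, and the case $q\ge 1/8$ for the bound $8q$. The gap is the convexity of $f$ on $[0,1/8]$. This is the only nontrivial step, and you leave it open.

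Your factorisation $f+1=gh$ needs $g''\ge 0$. However, $g$ is strictly log-concave there, not merely of indefinite sign. Since $\psi'$ is decreasing,
\[
(\log g)''=8\psi'(2-2q)-16\psi'(3-4q)\le 8\psi'(7/4)-16\psi'(3)\approx 6.11-6.32<0 .
\]
The check you propose, bounding each $\psi$ and $\psi'$ term separately by its worst endpoint value, does not close. It gives $(\log g)'\ge 4(\psi(5/2)-\psi(2))\approx 1.12$ and $(\log g)''\ge 8\psi'(2)-16\psi'(5/2)\approx -2.69$. Hence $(\log g)''+((\log g)')^2$ is only bounded below by $1.26-2.69<0$, so $g''\ge 0$ is not established as written. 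One way to rescue your route: use $(\log g)''<0$ to see that $(\log g)'$ is decreasing. Then $(\log g)'\ge (\log g)'(1/8)=4\bigl(\tfrac43+\log 2-\tfrac{\pi}{2}\bigr)\approx 1.82$, and the bound becomes $3.32-2.69>0$. Your fallbacks (bounding $f'$ by $f'(1/8)$, or subdividing) are too vague to count as an argument.

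\textbf{How the paper does it.} The cleaner idea, which is the paper's, is not to split off $h$ at all but to show that $1+f=gh$ is log-convex. On $[0,1/8]$ the factor $h$ contributes
\[
(\log h)''=\frac{16}{(1-4q)^2}-\frac{16}{(2-4q)^2}\ge 16-\frac{64}{9}.
\]
This easily dominates $(\log g)''\ge -2.69$, so $\log(1+f)$ is convex and hence so is $f$. The paper sets $\gamma=1-2q$ and uses $\Gamma(2\gamma+1)=2\gamma\Gamma(2\gamma)$ to write $\log(1+f)=2\log\Gamma(\gamma+1)-\log\Gamma(2\gamma)-\log(2\gamma-1)$. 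Using only the monotonicity of the trigamma function, it bounds the second derivative of this in $\gamma$ below by $2\psi_1(2)-4\psi_1(3/2)+4=18-5\pi^2/3>0$.
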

By the Taylor formula, as $q\downarrow0$,
$$
\frac{\Gamma^2(2-2q)}{\Gamma(3-4q)}\left(1+\frac{1}{1-4q}\right)-1
=2\left(1+\Gamma'(3)-2\Gamma'(2)\right)q+O(q^2).
$$
Differentiating the equality $\Gamma(x+1)=x\Gamma(x)$, one gets
$\Gamma'(x+1)=\Gamma(x)+x\Gamma'(x)$ and, in particular,
$\Gamma'(3)=1+2\Gamma'(2)$. Consequently,
$$
\frac{\Gamma^2(2-2q)}{\Gamma(3-4q)}\left(1+\frac{1}{1-4q}\right)-1
=4q+O(q^2).
$$
Thus, our simplified linear bound $7.25 q$ is not too crude. The expansion above also provides a more precise bound for small $q$. 

\section{Random walk representation and martingales}
Let \(\Delta_1(n)\) denote the colour imbalance in \(T_n\), where we identify the value \(1\) with red and the value \(-1\) with blue. Let \(R(n)\) and \(B(n)\) denote, respectively, the numbers of red and blue vertices in \(T_n\). Thus,
\[
\Delta_1(n):=R(n)-B(n).
\]
Since the majority rule depends only on the sign of this quantity, the precise tree structure is not directly relevant at this stage. By symmetry, we may assume without loss of generality that the root bit is red. Moreover, since the majority rule makes an unbiased random choice in the case of a tie,
\[	
R_{\mathrm{maj}}(n,q)= \mathbb P\left(\Delta_1(n)<0\right)+\frac12 \mathbb P\left(\Delta_1(n)=0\right)\le \mathbb P\left(\Delta_1(n)\le 0\right).
\]
Hence,
\begin{equation}\label{eq:Rmaj-Delta1}
	R_{\mathrm{maj}}(q)\le \limsup_{n\to\infty}\mathbb P\left(\Delta_1(n)\le 0\right).
\end{equation}
Therefore, in order to prove Theorem~\ref{thm:main} and Corollary~\ref{cor:main}, it suffices to derive suitable upper bounds on the right hand side of \eqref{eq:Rmaj-Delta1}.

As we shall see later, in the case of uniform random trees, \(\alpha=\beta=0\), the process \(\Delta_1(n)\) alone is sufficient to describe the evolution of the broadcasting process. In the general case, however, one needs additional information to determine the attachment probabilities. Let \(C_R(n)\) and \(C_B(n)\) denote, respectively, the numbers of children of red and blue vertices in the tree \(T_n\). We then set
\[
\Delta_2(n):=C_R(n)-C_B(n).
\]
As a result we have a $2$-dimensional process 
$$
\Delta(n)=(\Delta_1(n),\Delta_2(n)),\quad n\ge1
$$
with the initial condition 
$$
\Delta_1(1)=1,\qquad \Delta_2(1)=0,
$$
corresponding to the single red root.

Let $D(n)$ denote the increments of the process $\Delta(n)$, that is,
\[
D(n):=\Delta(n+1)-\Delta(n)=(D_1(n),D_2(n)) \in \{-1,1\}^2.
\]

The idea to introduce $\Delta_2(n)$ has been suggested in \cite{hartung2024random}. Also the next result is a slight generalisation of Lemma 2.8 in this paper.
\begin{lema}\label{lem:red-attachment-probability}
Let \(\{\mathcal{F}_n\}\) denote the natural filtration generated by the sequence $\{\Delta(n)\}$. Then
	\begin{align}\label{eq:p_vs_se}
    \nonumber
		p(\alpha,\beta,n)
        &:=\mathbb{P}(n+1 \sim \text{red vertex} \mid \mathcal{F}_n)\\
        &=\frac12\left(1+\frac{(\beta+1)\Delta_1(n)+\alpha\Delta_2(n)-\beta}{(\alpha+\beta)(n-1)+n}\right).
	\end{align}
\end{lema}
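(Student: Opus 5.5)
The plan is to compute the conditional probability directly from Definition~\ref{def:unified_recursive_tree} by summing the attachment probabilities over all red vertices. Let $\mathcal R_n$ be the set of red vertices of $T_n$ and $N_n:=(\alpha+\beta)(n-1)+n$ the denominator. Conditioning first on the full history $\mathcal G_n$ (the tree $T_n$ together with all colours), we get
\[
\mathbb P(n+1\sim\text{red}\mid\mathcal G_n)=\frac{1}{N_n}\sum_{v\in\mathcal R_n}\bigl(\alpha\deg_n^+(v)+\beta\mathbf 1_{\{v\neq1\}}+1\bigr)
=\frac{\alpha C_R(n)+\beta\bigl(R(n)-\mathbf 1_{\{1\in\mathcal R_n\}}\bigr)+R(n)}{N_n}.
\]
Here we use that $\sum_{v\in\mathcal R_n}\deg_n^+(v)=C_R(n)$, which is exactly the definition of the number of children of red vertices. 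By the standing convention the root is red, so the indicator equals $1$.

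The next step is to express everything through $\Delta(n)$. We have $R(n)+B(n)=n$, which gives $R(n)=\frac{n+\Delta_1(n)}2$. Every non-root vertex is the child of exactly one vertex, so $C_R(n)+C_B(n)=n-1$, which gives $C_R(n)=\frac{n-1+\Delta_2(n)}2$. Substituting, the numerator equals
\[
\tfrac12\bigl[(\alpha+\beta+1)n-\alpha+(\beta+1)\Delta_1(n)+\alpha\Delta_2(n)\bigr]-\beta .
\]
Since $(\alpha+\beta+1)n-\alpha=N_n+\beta$, this becomes $\tfrac12\bigl[N_n+(\beta+1)\Delta_1(n)+\alpha\Delta_2(n)-\beta\bigr]$. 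Dividing by $N_n$ gives \eqref{eq:p_vs_se}.

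Finally, the conditional probability given $\mathcal G_n$ is a function of $\Delta(n)$ and $n$ alone, and $\mathcal F_n\subseteq\mathcal G_n$. The tower property therefore shows that the same formula holds conditionally on $\mathcal F_n$.

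The main obstacle is this last point together with the root term. It requires the root to be red, as assumed by symmetry, and it requires the WLOG red root to be treated consistently. If one does not condition on $B_1=1$, the $-\beta$ would be replaced by $-\beta B_1$, which is not $\mathcal F_n$-measurable. I would therefore state explicitly that all probabilities are taken under $B_1=1$.
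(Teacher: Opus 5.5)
Your proof is correct and is essentially the paper's argument. The paper sums the attachment weights over the blue vertices instead, which are all non-root so no indicator appears, obtaining $\frac{\alpha C_B(n)+(\beta+1)B(n)}{(\alpha+\beta)(n-1)+n}$ and then taking the complement; you sum over the red vertices, subtract the root's $\beta$ directly, and use the same substitutions in terms of $\Delta_1(n),\Delta_2(n)$ (your explicit tower-property step from the full history to $\mathcal F_n$ is a welcome detail the paper leaves implicit).

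One small correction to your closing remark: since $\Delta_1(1)=B_1$, the term $-\beta B_1$ would in fact be $\mathcal F_n$-measurable. Fixing $B_1=1$ is needed only because the stated formula, with $-\beta$, presupposes a red root.
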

\begin{proof}
	It turns out that the conditional probability that the new vertex \(n+1\) attaches to a blue vertex is easier to compute.
	Since the root is red, \(\mathbf 1_{\{v\neq 1\}}=1\) for every blue vertex $v$ in the tree $T_n$.
    Consequently,
    \begin{align*}
    	\mathbb{P}(n+1 \sim \text{blue vertex} \mid \mathcal{F}_n)
    	&= \sum_{\substack{v \in [n] \\ B_v = -1}}
    	\frac{\alpha \deg_n^+(v)+\beta\mathbf 1_{\{v\neq 1\}}+1}
    	{(\alpha+\beta)(n-1)+n} \\
    	&= \frac{\alpha C_B(n)+(\beta+1)B(n)}
    	{(\alpha+\beta)(n-1)+n}.
    \end{align*}
    It is immediate from the definitions of $B(n)$, $R(n)$, $C_B(n)$ and $C_R(n)$ that
    $$
    B(n)+R(n)=n\quad\text{and}\quad C_B(n)+C_R(n)=n-1.
    $$
    These equalities imply that 
    $$
    B(n)=\frac{n-\Delta_1(n)}{2}
    \quad\text{and}\quad 
    C_B(n)=\frac{n-1-\Delta_2(n)}{2}.
    $$
	Therefore,
	\begin{align*}
		\mathbb{P}(n+1 \sim\text{blue vertex}\mid \mathcal F_n)
		&=\frac12 \frac{\alpha(n-1-\Delta_2(n))+(\beta+1)(n-\Delta_1(n))}{(\alpha+\beta)(n-1)+n}\\
		&=\frac12\left(1-\frac{(\beta+1)\Delta_1(n)+\alpha\Delta_2(n)-\beta}{(\alpha+\beta)(n-1)+n}\right).
	\end{align*}
	Taking the complement, we obtain \eqref{eq:p_vs_se}.
\end{proof}

The conditional probabilities of \(D_1(n)\) are then given by
\begin{align*}
	\mathbb{P}\left(D_1(n)=1\mid\mathcal{F}_n\right)
	&=p(\alpha,\beta,n)(1-q)+(1-p(\alpha,\beta,n))q\\
	&=q+p(\alpha,\beta,n)(1-2q)
\end{align*}
and
\begin{equation*}
	\mathbb{P}\left(D_1(n)=-1\mid\mathcal{F}_n\right)=1-q-p(\alpha,\beta,n)(1-2q).
\end{equation*}
Hence,
\begin{align}
	\mathbb{E}[D_1(n)\mid\mathcal{F}_n]
	&=q+p(\alpha,\beta,n)(1-2q)-\left(1-q-p(\alpha,\beta,n)(1-2q)\right)\nonumber\\
	&=2q-1+2p(\alpha,\beta,n)(1-2q)\nonumber\\
	&=(1-2q)(2p(\alpha,\beta,n)-1). \label{eq:D1-cond-exp}
\end{align}
For \(D_2(n)\) we have
\begin{equation*}
	\mathbb{P}\left(D_2(n)=1\mid\mathcal{F}_n\right)=p(\alpha,\beta,n)
\end{equation*}
and therefore,
\begin{equation*}
	\mathbb{P}\left(D_2(n)=-1\mid\mathcal{F}_n\right)=1-p(\alpha,\beta,n).
\end{equation*}
Hence,
\begin{equation}
	\mathbb{E}[D_2(n)\mid\mathcal{F}_n]=2p(\alpha,\beta,n)-1. \label{eq:D2-cond-exp}
\end{equation}
These equalities imply that the sequence $\{\Delta(n)\}$ is a $2$-dimensional time-inhomogeneous Markov chain. To analyse the asymptotic properties of this chain we shall use martingales which we introduce now.

Combining \eqref{eq:D1-cond-exp} and \eqref{eq:D2-cond-exp}, we obtain
\begin{align*}
	\mathbb{E}\left[D_1(n)-(1-2q)D_2(n)\mid\mathcal{F}_n\right]
	&=(1-2q)(2p(\alpha,\beta,n)-1)-(1-2q)(2p(\alpha,\beta,n)-1)\\
	&=0.
\end{align*}
Consequently, the process \(\{L(n)\}_{n\ge1}\) defined by
\begin{equation*}
	L(n):=\Delta_1(n)-\gamma\,\Delta_2(n)
\end{equation*}
with \(\gamma:=1-2q\) is a martingale.

To introduce the second martingale we define
\begin{equation*}
	S(n):=(\beta+1)\Delta_1(n)+\alpha\Delta_2(n)-\beta, \qquad n\geq 1.
\end{equation*}

By \eqref{eq:D1-cond-exp} and \eqref{eq:D2-cond-exp}, we have
\begin{align*}
	\mathbb{E}[S(n+1)\mid\mathcal{F}_n]
	&= S(n)+(\beta+1)\mathbb{E}[D_1(n)\mid\mathcal{F}_n]+\alpha\mathbb{E}[D_2(n)\mid\mathcal{F}_n] \\
	&= S(n)+(\beta+1)(1-2q)\bigl(2p(\alpha,\beta,n)-1\bigr)+\alpha\bigl(2p(\alpha,\beta,n)-1\bigr)\\
	&=S(n)+\left((\beta+1)(1-2q)+\alpha\right)\left(2p(\alpha,\beta,n)-1\right).
\end{align*}
Furthermore, the equality \eqref{eq:p_vs_se} can be written as
\[
2p(\alpha,\beta,n)-1=\frac{S(n)}{(\alpha+\beta)(n-1)+n}\;.
\]
Hence,
\begin{equation}\label{eq:Sn-recurrence}
	\mathbb{E}[S(n+1)\mid\mathcal{F}_n]=S(n)\left(1+\frac{\theta}{nZ_{\alpha,\beta}(n)}\right),
\end{equation}
where
\begin{equation}\label{eq:def-theta-Z}
	\theta:= (\beta+1)(1-2q)+\alpha\quad\text{and}\quad Z_{\alpha,\beta}(n):=1+(\alpha+\beta)\left(1-\frac{1}{n}\right).
\end{equation}
Setting now
\begin{equation}\label{eq:cn_product}
	c_n=\prod_{k=1}^{n-1}\left(1+\frac{\theta}{kZ_{\alpha,\beta}(k)}\right)^{-1},\quad n\ge1,
\end{equation}
we infer from \eqref{eq:Sn-recurrence} that the sequence
\begin{equation}
\label{eq:Mn-def}
M(n):=c_nS(n),\quad n\ge1
\end{equation}
is a martingale. This martingale is very similar to the standard martingale for Elephant's random walk, see \cite{Bercu18}.
\begin{remark}
In the case when $\alpha=0$, the attaching probability in \eqref{eq:p_vs_se}  and the martingale $M(n)$ do not depend on $\Delta_2(n)$. This implies that to analyse the majority rule in the case $\alpha=0$ it is sufficient to consider $\Delta_1(n)$ only.  
\end{remark}

\section{Central Limit Theorem for \(L(n)\).}
\label{sec:clt-martingale}
In this short section we show that the martingale $L(n)$ satisfies the central limit theorem.

Let $\{\delta(n)\}$ denote the increments of \(\{L(n)\}\), that is,
\[
\delta(n)=L(n+1)-L(n)=D_1(n)-\gamma D_2(n),\quad n\ge1.
\]
To compute the conditional second moment of $\delta(n)$, we write
\[
\mathbb{E}[\delta^2(n)\mid\mathcal{F}_n]
=\mathbb{E}[D_1^2(n)\mid\mathcal{F}_n]
+\gamma^2\mathbb{E}[D_2^2(n)\mid\mathcal{F}_n]
-2\gamma\,\mathbb{E}[D_1(n)D_2(n)\mid\mathcal{F}_n].
\]
Because \(D_1^2(n)=D_2^2(n)=1\), the first two terms equal \(1+\gamma^2\).
For the last term, note that \(D_1(n)D_2(n)=1\) exactly when the new vertex receives the
same colour as its parent (which happens with probability \(1-q\)), and \(D_1(n)D_2(n)=-1\)
otherwise. Hence, independently of the parent's colour,
\begin{equation}\label{eq:D1D2-cross}
	\mathbb{E}[D_1(n)D_2(n)\mid\mathcal{F}_n]=1\cdot(1-q)+(-1)\cdot q=1-2q.	
\end{equation}
Recalling that \(\gamma=1-2q\), we finally obtain
\[
\mathbb{E}[\delta^2(n)\mid\mathcal{F}_n]=1+\gamma^2-2\gamma^2=1-\gamma^2=1-(1-2q)^2=4q(1-q).
\]
Note also that the increments $\delta(k)$ are uniformly bounded:
\[
|\delta(n)|
\le |D_1(n)|+|\gamma|\,|D_2(n)|
\le 1+|\gamma|
\le 2.
\]
This implies that the Lindeberg condition is valid and, owing to the central limit theorem for martingales, see  \cite[Theorem~35.12]{Billingsley1995}, we conclude that
\[
\frac{L(n)-L(1)}{\sqrt{4q(1-q)n}}
=\frac{\sum_{k=1}^{n-1}\delta(k)}{\sqrt{4q(1-q)n}}\xrightarrow{d}\mathcal{N}(0,1).
\]
Recalling that \(L(1)=1\) we conclude that
\begin{equation}\label{eq:clt-L}
	\frac{L(n)}{\sqrt n}\xrightarrow{d}\mathcal N\left(0,4q(1-q)\right)
\end{equation}
for all \(q\in (0,1)\).

In the subsequent section we shall show that $S(n)$ grows as $n^a$
with some $a>1/2$. The convergence \eqref{eq:clt-L} implies then that $\Delta_1(n)$, $\Delta_2(n)$ grow also as $n^a$. From the technical point of view, \eqref{eq:clt-L} gives one a more efficient information on the relation between $\Delta_1(n)$ and $M(n)$ than Lemma 4.6 in \cite{hartung2024random}. Also the proof of \eqref{eq:clt-L} is much simpler than the arguments used in \cite{hartung2024random}.
\section{Asymptotic properties of $M(n)$}
\label{sec:martingale-unified}
We start by determining the asymptotics for the normalising sequence $\{c_n\}$. Before we formulate the next lemma, note that, under the assumption $a > 1/2$ of Theorem \ref{thm:main}, we have that $\theta$ defined in \eqref{eq:def-theta-Z} is positive. Indeed, the inequality $\theta>0$ is equivalent to $2q(\beta+1)<\alpha+\beta+1$ and it holds as the assumption $a>1/2$ is equivalent to $4q(\beta+1) < \alpha+\beta+1$.

\begin{lema}
\label{lem:Cn-asymp}
Under the assumptions of Theorem \ref{thm:main},
\begin{equation}\label{eq:c_n-asymptotic}
c_n\sim Q n^{-a}
\quad\text{as }n\to\infty,
\end{equation}
where
$$
a=1-\frac{2q(\beta+1)}{\alpha+\beta+1}
\quad\text{and}\quad 
Q=\frac{\Gamma(\frac{1+\theta}{1+\alpha+\beta})}
{\Gamma(\frac{1}{1+\alpha+\beta})}.
$$
\end{lema}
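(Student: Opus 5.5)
The plan is to rewrite each factor of the product \eqref{eq:cn_product} as a ratio of linear functions of $k$, so that $c_n$ becomes a ratio of Gamma functions, and then read off the asymptotics from Stirling's formula. Write $s:=\alpha+\beta+1>0$. Then
$$
kZ_{\alpha,\beta}(k)=k+(\alpha+\beta)(k-1)=sk-(\alpha+\beta)=s\Bigl(k-1+\tfrac{1}{s}\Bigr).
$$
Hence
$$
1+\frac{\theta}{kZ_{\alpha,\beta}(k)}=\frac{k-1+\frac{1+\theta}{s}}{k-1+\frac1s}.
$$
Taking the product over $k=1,\dots,n-1$ and using $\prod_{j=0}^{n-2}(j+x)=\Gamma(n-1+x)/\Gamma(x)$ for $x>0$, we get the exact identity
$$
c_n=\frac{\Gamma\bigl(\frac{1+\theta}{s}\bigr)}{\Gamma\bigl(\frac1s\bigr)}\cdot\frac{\Gamma\bigl(n-1+\frac1s\bigr)}{\Gamma\bigl(n-1+\frac{1+\theta}{s}\bigr)}.
$$
This step needs every factor to be finite and nonzero. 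That holds because $1/s>0$, and $\theta>0$ under the hypothesis $a>1/2$ (as noted before the lemma), so both Gamma arguments are positive and each factor of \eqref{eq:cn_product} is positive.

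Next, apply the standard consequence of Stirling's formula, $\Gamma(x+u)/\Gamma(x+v)\sim x^{u-v}$ as $x\to\infty$, with $x=n-1$. This gives
$$
c_n\sim Q\,n^{(1-(1+\theta))/s}=Q\,n^{-\theta/s},
$$
where $Q$ is as in the statement.

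It remains to check that $\theta/s=a$. Indeed,
$$
\theta=(\beta+1)(1-2q)+\alpha=s-2q(\beta+1),
$$
so $\theta/s=1-\frac{2q(\beta+1)}{\alpha+\beta+1}=a$.

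No step is a real obstacle. The only point needing care is positivity of the Gamma arguments, and in particular the admissible case $\alpha<0$, $\beta=D_2/D_1-1$. There $s=D_2/D_1>0$, and $\theta>0$ still follows from $a>1/2$, so the argument goes through unchanged.
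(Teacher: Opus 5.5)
Your proof is correct and is essentially the paper's argument: your factor $(k-1+\frac{1+\theta}{s})/(k-1+\frac{1}{s})$ is exactly the paper's $(1+\kappa_1/k)(1+\kappa_2/k)^{-1}$, and you simply combine the two Gamma products into one exact identity for $c_n$ before applying Stirling. One harmless slip: in the case $\alpha=-1/D_1$, $\beta=D_2/D_1-1$ you have $s=\alpha+\beta+1=(D_2-1)/D_1$, not $D_2/D_1$; this is still positive because $D_2\ge 2$ (and $s>0$ is built into the admissible set $\mathcal{D}$ anyway), so nothing changes.
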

\begin{proof}
By \eqref{eq:def-theta-Z}, we have
\begin{align*}
	1+\frac{\theta}{kZ_{\alpha,\beta}(k)}
	&=1+\frac{\theta}{k+(\alpha+\beta)(k-1)} \\
	&=\frac{(\alpha+\beta+1)k-(\alpha+\beta)+\theta}{(\alpha+\beta+1)k-(\alpha+\beta)} \\
	&=\left(1+\frac{\theta-(\alpha+\beta)}{(\alpha+\beta+1)k}\right)\left(1-\frac{\alpha+\beta}{(\alpha+\beta+1)k}\right)^{-1}.
\end{align*}
Substituting this into \eqref{eq:cn_product} yields
\[
c_n=\prod_{k=1}^{n-1}\left(1+\frac{\theta-(\alpha+\beta)}{(\alpha+\beta+1)k}\right)^{-1}\; \prod_{k=1}^{n-1}\left(1-\frac{\alpha+\beta}{(\alpha+\beta+1)k}\right).
\]
The two products can now be studied separately. Recall that for every constant
\(\kappa>-1\),
\begin{equation*}
	\prod_{k=1}^{n}\left(1+\frac{\kappa}{k}\right)=\frac{\Gamma(n+1+\kappa)}{\Gamma(1+\kappa)\Gamma(n+1)},
\end{equation*}
and hence, by Stirling's formula,
\begin{equation}\label{eq:product-asymptotic}
	\prod_{k=1}^{n}\left(1+\frac{\kappa}{k}\right)\sim \frac{n^\kappa}{\Gamma(1+\kappa)}\qquad\text{as } n\to\infty.
\end{equation}
Using \eqref{eq:product-asymptotic} with
\[
\kappa_1 = \frac{\theta - (\alpha+\beta)}{\alpha+\beta+1}
\quad\text{and}\quad
\kappa_2 = -\frac{\alpha+\beta}{\alpha+\beta+1},
\]
we conclude that 
$$
c_n\sim
\frac{\Gamma(1+\kappa_1)}{\Gamma(1+\kappa_2)}n^{\kappa_2-\kappa_1}
\quad\text{as }n\to\infty.
$$
Noting that 
$$
\kappa_2-\kappa_1=-1+\frac{2q(\beta+1)}{\alpha+\beta+1}
$$
and
$$
1+\kappa_1=\frac{1+\theta}{1+\alpha+\beta},
\quad
1+\kappa_2=\frac{1}{1+\alpha+\beta},
$$
we get the desired relation.
\end{proof}
To show that the martingale \(M(n)=c_nS(n)\) converges in $L^2$ we derive next an upper bound for its second moments.
\begin{lema}
\label{lem:cond2}
	For every $n\ge1$ we have
	\begin{equation}\label{eq:M(n)-second-moment}
		\mathbb{E}\left[M^2(n+1) \mid \mathcal{F}_n\right] =\frac{c_{n+1}^2}{c_n^2}\left(1+\frac{2\theta}{n\,Z_{\alpha,\beta}(n)}\right) M^2(n) + c_{n+1}^2 K, 
	\end{equation}
	where
	\[
	K := (\beta+1)^2+\alpha^2+2\alpha(\beta+1)\gamma.
	\]
\end{lema}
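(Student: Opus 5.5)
Write $M(n+1)=c_{n+1}S(n+1)$ and $S(n+1)=S(n)+\eta(n)$, where
\[
\eta(n):=(\beta+1)D_1(n)+\alpha D_2(n)
\]
is the increment of $S$. Then
\[
\mathbb{E}[M^2(n+1)\mid\mathcal F_n]=c_{n+1}^2\Bigl(S^2(n)+2S(n)\,\mathbb{E}[\eta(n)\mid\mathcal F_n]+\mathbb{E}[\eta^2(n)\mid\mathcal F_n]\Bigr).
\]
The plan is to evaluate the two conditional expectations explicitly and then rewrite $c_{n+1}^2S^2(n)$ as $\frac{c_{n+1}^2}{c_n^2}M^2(n)$.

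For the linear term, the computation preceding \eqref{eq:Sn-recurrence} already gives
\[
\mathbb{E}[\eta(n)\mid\mathcal F_n]=\theta\,(2p(\alpha,\beta,n)-1)=\frac{\theta S(n)}{nZ_{\alpha,\beta}(n)}.
\]
This follows from \eqref{eq:D1-cond-exp}, \eqref{eq:D2-cond-exp} and the identity $2p-1=S(n)/((\alpha+\beta)(n-1)+n)$, using $(\alpha+\beta)(n-1)+n=nZ_{\alpha,\beta}(n)$. Hence the linear term contributes $\frac{2\theta}{nZ_{\alpha,\beta}(n)}S^2(n)$.

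For the quadratic term, expand $\eta^2(n)=(\beta+1)^2D_1^2(n)+\alpha^2D_2^2(n)+2\alpha(\beta+1)D_1(n)D_2(n)$. Since $D_1^2=D_2^2=1$, and since \eqref{eq:D1D2-cross} gives $\mathbb{E}[D_1(n)D_2(n)\mid\mathcal F_n]=1-2q=\gamma$, we obtain
\[
\mathbb{E}[\eta^2(n)\mid\mathcal F_n]=(\beta+1)^2+\alpha^2+2\alpha(\beta+1)\gamma=K,
\]
which is deterministic.

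Combining, $\mathbb{E}[M^2(n+1)\mid\mathcal F_n]=c_{n+1}^2\bigl(1+\frac{2\theta}{nZ_{\alpha,\beta}(n)}\bigr)S^2(n)+c_{n+1}^2K$. Substituting $S^2(n)=M^2(n)/c_n^2$ yields \eqref{eq:M(n)-second-moment}. Note that $c_n>0$ because $\theta>0$, so the division is legitimate.

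There is no real obstacle. The only point needing care is the cross term. It relies on the observation, already recorded in \eqref{eq:D1D2-cross}, that $D_1D_2=1$ exactly when the child copies its parent's colour. This holds regardless of the parent's colour, which is what makes $K$ constant.
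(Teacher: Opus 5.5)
Your proposal is correct and follows the paper's proof essentially step for step: you decompose $S(n+1)=S(n)+\eta(n)$ (the paper calls the increment $U(n)$), take the linear term from \eqref{eq:Sn-recurrence}, get the constant $K$ from \eqref{eq:D1D2-cross} together with $D_1^2=D_2^2=1$, and then substitute $S(n)=M(n)/c_n$. Your remark that $c_n>0$ is a harmless addition that the paper leaves implicit.
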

\begin{proof}
	Set \(U(n):=S(n+1)-S(n)\). It follows then from the definition of $S(n)$ that
	\[
	U(n)=(\beta+1)D_1(n)+\alpha D_2(n).
	\]
	According to \eqref{eq:Sn-recurrence}, 
	\begin{equation}\label{eq:U(n)-first-moment}
		\mathbb{E}[U(n)\mid\mathcal{F}_n] = \frac{\theta S(n)}{nZ_{\alpha,\beta}(n)}\, .
	\end{equation}
	By \eqref{eq:D1D2-cross} and \(D_1^2(n) = D_2^2(n) = 1\), we also have
	\begin{equation*}
		\mathbb{E}[U^2(n)\mid\mathcal{F}_n]
		= (\beta+1)^2+\alpha^2+2\alpha(\beta+1)\gamma
		= K.
	\end{equation*}
	This together with \eqref{eq:U(n)-first-moment} yields
	\begin{align}
		\mathbb{E}[S^2(n+1)\mid\mathcal{F}_n]
		&=\mathbb{E}[(S(n)+U(n))^2\mid\mathcal{F}_n] \nonumber\\
		&= S^2(n) + 2S(n)\mathbb{E}[U(n)\mid\mathcal{F}_n] + \mathbb{E}[U^2(n)\mid\mathcal{F}_n] \nonumber\\
		&= S^2(n)\left(1+\frac{2\theta}{nZ_{\alpha,\beta}(n)}\right) + K.\label{eq:S(n)-second-moment}
	\end{align}
	Multiplying by \(c_{n+1}^2\) and using \(S(n) = \frac{M(n)}{c_n}\) gives \eqref{eq:M(n)-second-moment}.
\end{proof}
\begin{lema}
\label{lem:Mn-conv}
Assume that $\alpha,\beta$ and $q$ are such that
$$
a=1-\frac{2q(\beta+1)}{\alpha+\beta+1}>\frac{1}{2}.
$$
Then there exists a random variable $M(\infty)$ such that
$$
M(n)\to M(\infty)
$$
almost surely and in $L^2$.
\end{lema}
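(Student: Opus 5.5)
Since $M(n)$ is a martingale, I will show that it is bounded in $L^2$. The $L^2$ martingale convergence theorem then gives a limit $M(\infty)$ with $M(n)\to M(\infty)$ almost surely and in $L^2$.

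The computation rests on Lemma~\ref{lem:cond2}. By definition of $c_n$ in \eqref{eq:cn_product},
\[
\frac{c_{n+1}}{c_n}=\Bigl(1+\frac{\theta}{nZ_{\alpha,\beta}(n)}\Bigr)^{-1}.
\]
Taking expectations in \eqref{eq:M(n)-second-moment} therefore gives
\[
\mathbb E[M^2(n+1)]=\frac{1+2x_n}{(1+x_n)^2}\,\mathbb E[M^2(n)]+c_{n+1}^2K,
\qquad x_n:=\frac{\theta}{nZ_{\alpha,\beta}(n)} .
\]
Recall that $\theta>0$ under the assumption $a>1/2$, as noted before Lemma~\ref{lem:Cn-asymp}. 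Hence $x_n\ge 0$, and
\[
(1+x_n)^2=1+2x_n+x_n^2\ge 1+2x_n,
\]
so the factor $\frac{1+2x_n}{(1+x_n)^2}$ lies in $(0,1]$. (One should check that $Z_{\alpha,\beta}(n)>0$ for all $n\ge1$; this holds because $Z_{\alpha,\beta}(n)n=(\alpha+\beta)(n-1)+n>0$ is the positive denominator of the attachment probabilities.)

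Iterating the resulting inequality,
\[
\mathbb E[M^2(n)]\le M^2(1)+K\sum_{k=2}^{n}c_k^2 .
\]
The initial term is deterministic, with $M(1)=c_1S(1)=(\beta+1)-\beta=1$. Since $K$ is a constant, boundedness reduces to summability of $c_k^2$. By Lemma~\ref{lem:Cn-asymp}, $c_k^2\sim Q^2k^{-2a}$, and the series $\sum k^{-2a}$ converges exactly when $a>1/2$, which is the hypothesis. Thus $\sup_n\mathbb E[M^2(n)]<\infty$, and the martingale convergence theorem in $L^2$ (Doob) yields both almost sure and $L^2$ convergence.

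The only delicate point is the contraction factor being at most $1$, which needs $\theta\ge0$ and $Z_{\alpha,\beta}(n)>0$. Both have been verified above, so I do not expect a real obstacle. I would also record the explicit bound $\mathbb E[M^2(\infty)]\le 1+K\sum_{k\ge2}c_k^2$, since it is presumably what feeds the constant in \eqref{eq:main_bound}.
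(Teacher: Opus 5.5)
Your proposal is correct and matches the paper's proof essentially step for step. You bound the factor $\frac{c_{n+1}^2}{c_n^2}\bigl(1+\frac{2\theta}{nZ_{\alpha,\beta}(n)}\bigr)$ by $1$, iterate from $M(1)=1$ to get $\mathbb E[M^2(n)]\le 1+K\sum_{k\ge2}c_k^2$, obtain summability from Lemma~\ref{lem:Cn-asymp} and $a>1/2$, and conclude by $L^2$ martingale convergence. Your explicit checks that $\theta>0$ and $Z_{\alpha,\beta}(n)>0$ are details the paper leaves implicit. One side remark of yours is off: the constant in \eqref{eq:main_bound} comes from the exact value of $\mathbb E[M^2(\infty)]$ computed in Lemma~\ref{lem:var-asymp}, not from the cruder bound $1+K\sum_{k\ge2}c_k^2$.
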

\begin{proof}
It is immediate from the definition of $c_n$ that 
$$
\frac{c_{n+1}}{c_n}
=\frac{1}{1+\frac{\theta}{nZ_{\alpha,\beta}(n)}}.
$$
Therefore,
$$
\frac{c_{n+1}^2}{c_n^2}
\left(1+\frac{2\theta}{nZ_{\alpha,\beta}(n)}\right)
=\frac{1+\frac{2\theta}{nZ_{\alpha,\beta}(n)}}{
1+\frac{2\theta}{nZ_{\alpha,\beta}(n)}+
\frac{\theta^2}{n^2Z_{\alpha,\beta}^2(n)}}\le 1.
$$
Applying this inequality to the right hand side of 
\eqref{eq:M(n)-second-moment} and taking expectations, we obtain
\[
\mathbb{E}[M^2(n+1)]\le \mathbb{E}[M^2(n)]+Kc_{n+1}^2.
\]
Iterating this inequality and recalling that $M(1)=1$, we conclude that 
\[
\mathbb{E}[M^2(n)]\le 1+K\sum_{k=2}^{n}c_k^2,\quad n\ge1.
\]
The assumption $a>1/2$ and \eqref{eq:c_n-asymptotic} imply that
the sequence $\{c_n^2\}$ is summable.
Therefore,
\[
\sup_{n\ge1}\mathbb{E}[M^2(n)]<\infty.
\]
The martingale convergence theorem yields the desired result.
\end{proof}
In the proof of our main result we will need an exact expression for the variance of the limiting random variable $M(\infty)$.
\begin{lema}
\label{lem:var-asymp}    
Under the conditions of the previous lemma one has
$$
\mathbb{E}[M^2(\infty)]=Q^2\frac{\Gamma(1+\kappa_2)}{\Gamma(1+\kappa_2+2a)}
\left(1+K\frac{1+\kappa_2}{2a-1}\right).
$$
\end{lema}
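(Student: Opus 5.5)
The plan is to compute $\mathbb{E}[S^2(n)]$ exactly from the recursion \eqref{eq:S(n)-second-moment}, multiply by $c_n^2$, and pass to the limit. This is legitimate because Lemma~\ref{lem:Mn-conv} gives $M(n)\to M(\infty)$ in $L^2$, so $\mathbb{E}[M^2(\infty)]=\lim_n c_n^2\,\mathbb{E}[S^2(n)]$.

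\textbf{Step 1: solve the linear recursion.} Set $s_n:=\mathbb{E}[S^2(n)]$. Taking expectations in \eqref{eq:S(n)-second-moment} gives
$$s_{n+1}=\Bigl(1+\frac{2\theta}{nZ_{\alpha,\beta}(n)}\Bigr)s_n+K,\qquad s_1=S(1)^2=1 .$$
With $b_n:=\prod_{k=1}^{n-1}\bigl(1+\frac{2\theta}{kZ_{\alpha,\beta}(k)}\bigr)$, this linear first-order recursion has the solution
$$s_n=b_n\Bigl(1+K\sum_{j=2}^{n}b_j^{-1}\Bigr).$$

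\textbf{Step 2: write $b_n$ and $c_n$ as Gamma ratios.} I factor $1+\frac{2\theta}{kZ_{\alpha,\beta}(k)}$ exactly as in the proof of Lemma~\ref{lem:Cn-asymp}, with $\theta$ replaced by $2\theta$. The key observation is that $\theta=a(\alpha+\beta+1)$. Hence $\kappa_1=a+\kappa_2$, and the analogous exponent for $b_n$ is $\kappa_3:=\frac{2\theta-(\alpha+\beta)}{\alpha+\beta+1}=2a+\kappa_2$. Using the exact Gamma product formula, I get
$$b_n=\frac{\Gamma(1+\kappa_2)\,\Gamma(n+\kappa_2+2a)}{\Gamma(1+\kappa_2+2a)\,\Gamma(n+\kappa_2)}\sim\frac{\Gamma(1+\kappa_2)}{\Gamma(1+\kappa_2+2a)}\,n^{2a}.$$
Combined with $c_n\sim Qn^{-a}$ from Lemma~\ref{lem:Cn-asymp}, this yields
$$c_n^2b_n\to Q^2\,\frac{\Gamma(1+\kappa_2)}{\Gamma(1+\kappa_2+2a)}.$$

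\textbf{Step 3: evaluate the series.} Since $2a>1$, the series $\sum_j b_j^{-1}$ converges, and it remains to show $\sum_{j\ge2}b_j^{-1}=\frac{1+\kappa_2}{2a-1}$. I would use the telescoping identity, valid for $s=2a$ and $x=\kappa_2$,
$$\frac{\Gamma(j+x)}{\Gamma(j+x+s)}=\frac{1}{s-1}\left(\frac{\Gamma(j+x)}{\Gamma(j+x+s-1)}-\frac{\Gamma(j+1+x)}{\Gamma(j+x+s)}\right).$$
It follows from $\Gamma(j+x+s)=(j+x+s-1)\Gamma(j+x+s-1)$. Summing over $j\ge2$, the boundary term at infinity vanishes because $s-1=2a-1>0$, and the sum equals $\frac{1}{2a-1}\frac{\Gamma(2+\kappa_2)}{\Gamma(1+\kappa_2+2a)}$. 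Multiplying by the prefactor $\frac{\Gamma(1+\kappa_2+2a)}{\Gamma(1+\kappa_2)}$ from $b_j^{-1}$ gives exactly $\frac{1+\kappa_2}{2a-1}$. Putting the three steps together yields the formula in Lemma~\ref{lem:var-asymp}.

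The main obstacle is bookkeeping rather than analysis. One must recognise the exponent relations $\kappa_1=\kappa_2+a$ and $\kappa_3=\kappa_2+2a$, and find the telescoping form of the Gamma-ratio series so that the sum comes out in closed form. An alternative route via a hypergeometric summation (Gauss's formula) would also work, but telescoping is more elementary.
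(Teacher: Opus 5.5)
Your proof is correct and follows the paper's route: the same linear recursion for $\mathbb{E}[S^2(n)]$ with its explicit solution, the same Gamma-ratio form $\prod_{k=1}^{n-1}A_k=\frac{\Gamma(n+\kappa_2+2a)\Gamma(1+\kappa_2)}{\Gamma(1+\kappa_2+2a)\Gamma(n+\kappa_2)}$, and the same passage to the limit via $L^2$ convergence. The only difference is how you sum $\sum_{j\ge2}\Gamma(j+\kappa_2)/\Gamma(j+\kappa_2+2a)$: you telescope using $\Gamma(z+1)=z\Gamma(z)$, whereas the paper writes each term as a Beta integral $B(j+\kappa_2,2a)/\Gamma(2a)$ and swaps sum and integral (Tonelli) to obtain $B(2+\kappa_2,2a-1)$; your version is slightly more elementary and equally short.
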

\begin{proof}
Set
$$
s_n:=\mathbb{E}[S^2(n)]
\quad\text{and}\quad
A_n:=1+\frac{2\theta}{nZ_{\alpha,\beta}(n)}.
$$
It follows then from \eqref{eq:S(n)-second-moment} that
\begin{equation*}
s_{n+1}=A_ns_n+K,\quad n\ge1.
\end{equation*}
Iterating this equation and noting that $s_1=1$, one gets easily 
\begin{equation}
\label{eq:sn-soluton}
s_n=\left(1+K\sum_{j=2}^n\prod_{k=1}^{j-1}A_k^{-1}\right)
\prod_{k=1}^{n-1}A_k.
\end{equation}
Recall the definitions of $\kappa_2$ and $a$ in 
Lemma~\ref{lem:Cn-asymp}. One has the following representation for $A_n$:
\[
A_n
=1+\frac{2\theta}{(\alpha+\beta+1)n-(\alpha+\beta)}
=\frac{(\alpha+\beta+1)n+2\theta-(\alpha+\beta)}{(\alpha+\beta+1)n-(\alpha+\beta)}
=\frac{n+(\kappa_2+2a)}{n+\kappa_2}\; .
\]
Consequently,
\begin{equation}\label{eq:An-form}
\prod_{k=1}^{n-1}A_k=
\frac{\Gamma(n+\kappa_2+2a)}{\Gamma(1+\kappa_2+2a)}
\frac{\Gamma(1+\kappa_2)}{\Gamma(n+\kappa_2)}.
\end{equation}
Applying now Stirling's formula, we conclude that
\begin{equation}
\label{eq:An-asymp}
\prod_{k=1}^{n-1}A_k\sim 
\frac{\Gamma(1+\kappa_2)}{\Gamma(1+\kappa_2+2a)}n^{2a}
\quad\text{as }n\to\infty.
\end{equation}
Using this asymptotic relation and recalling that $a>1/2$, we infer that $\sum_{j=2}^\infty \prod_{k=1}^{j-1}A_k^{-1}$ is finite. We next determine the exact value of this series.
Due to \eqref{eq:An-form},
\begin{align*}
\sum_{j=2}^\infty \prod_{k=1}^{j-1}A_k^{-1}
&=\frac{\Gamma(1+\kappa_2+2a)}{\Gamma(1+\kappa_2)}
\sum_{j=2}^\infty
\frac{\Gamma(j+\kappa_2)}{\Gamma(j+\kappa_2+2a)}\\
&=\frac{\Gamma(1+\kappa_2+2a)}{\Gamma(1+\kappa_2)\Gamma(2a)}
\sum_{j=2}^\infty B(j+\kappa_2,2a),
\end{align*}
where $B(x,y)$ stands for the beta function. Recalling that 
$$
B(x,y)=\int_0^1t^{x-1}(1-t)^{y-1}dt
$$
and using Tonelli's theorem, we obtain 
\begin{align*}
\sum_{j=2}^\infty B(j+\kappa_2,2a)
&=\int_0^1\left(\sum_{j=2}^\infty t^{j+\kappa_2-1}\right)
(1-t)^{2a-1}dt\\
&=\int_0^1t^{1+\kappa_2}(1-t)^{2a-2}dt
=B(2+\kappa_2,2a-1).
\end{align*}
Consequently,
\begin{align*}
\sum_{j=2}^\infty \prod_{k=1}^{j-1}A_k^{-1}
=\frac{1+\kappa_2}{2a-1}.
\end{align*}
Combining this with \eqref{eq:An-asymp}, we conclude that
$$
s_n\sim 
\frac{\Gamma(1+\kappa_2)}{\Gamma(1+\kappa_2+2a)}
\left(1+K\frac{1+\kappa_2}{2a-1}\right)
n^{2a}
\quad\text{as }n\to\infty.
$$
Recalling that $M(n)=c_nS(n)$ and using Lemma~\ref{lem:Cn-asymp}, we obtain
\begin{align*}
\lim_{n\to\infty}\mathbb{E}[M^2(n)]
=\lim_{n\to\infty}c_n^2s_n
=Q^2\frac{\Gamma(1+\kappa_2)}{\Gamma(1+\kappa_2+2a)}
\left(1+K\frac{1+\kappa_2}{2a-1}\right).
\end{align*}
Recalling that $M(n)$ converges to $M(\infty)$ in $L^2$,we finish the proof.
\end{proof}
\section{Proof of the main result}
Recall that
\[
(\beta+1)\Delta_1(n)+\alpha\Delta_2(n)=S(n)+\beta
\quad\text{and}\quad
\Delta_1(n)-\gamma\Delta_2(n)=L(n).
\]
Therefore,
\begin{equation}\label{eq:Delta_1-S(n)-L(n)}
	\Delta_1(n)=\frac{\gamma(S(n)+\beta)+\alpha L(n)}{\theta},
	\quad\text{and}\quad
	\Delta_2(n)=\frac{(S(n)+\beta)-(\beta+1)L(n)}{\theta}.
\end{equation}

Fix some $\varepsilon\in(0,\frac{\gamma}{2\theta})$ and some $A>0$.
We then have
\begin{align}
	\mathbb{P}\left(\Delta_1(n)\le 0\right)
	&\le \mathbb{P}\left(c_n\Delta_1(n)\le \varepsilon\right) \nonumber\\
	&= \mathbb{P}\left(c_n\Delta_1(n)\le\varepsilon,\; |L(n)|\le A\sqrt{n}\right)+\mathbb{P}\left(c_n\Delta_1(n)\le\varepsilon,\; |L(n)|> A\sqrt{n}\right) \nonumber\\
	&=: P_1(n) + P_2(n). \label{eq:P_1-P_2}
\end{align}

Owing to \eqref{eq:clt-L}, we conclude that, uniformly in $q\in(0,1)$, 
\begin{align}
	\limsup_{n\to\infty} P_2(n)
	&\le \limsup_{n\to\infty}\mathbb P\left(|L(n)|>A\sqrt n\right) \nonumber\\
	&= 2\left(1-\Phi\left(\frac{A}{2\sqrt{q(1-q)}}\right)\right) \le 2\left(1-\Phi(A)\right), \label{eq:P_2-bound}
\end{align}
where \(\Phi\) denotes the distribution function of the standard normal law.

To derive an upper bound for $P_1(n)$, we notice first that, due to \eqref{eq:Delta_1-S(n)-L(n)},
\begin{align*}
P_1(n)
&=\mathbb{P}(c_n(\gamma(S(n)+\beta)+\alpha L(n))\le \theta\varepsilon,|L(n)|\le A\sqrt{n})\\
&\le \mathbb{P}\left(M(n)\le\frac{\theta\varepsilon+|\alpha| Ac_n\sqrt{n}-\beta\gamma c_n}{\gamma}\right).
\end{align*}
Since $a>1/2$, $c_n\sqrt{n}$ converges to zero. Thus, for all sufficiently large $n$, 
$$
P_1(n)\le\mathbb{P}(M(n)\le 2\theta\varepsilon/\gamma).
$$
Applying now the Chebyshev inequality and recalling that $\mathbb{E}[M(n)]=1$, we conclude that
$$
P_1(n)\le \frac{\mathbb{E}[M^2(n)]-1}{(1-2\theta\varepsilon/\gamma)^2}.
$$
Letting here $n\to\infty$, we finally arrive at
\begin{equation*}
\limsup_{n\to\infty}P_1(n)\le 
\frac{\mathbb{E}[M^2(\infty)]-1}{(1-2\theta\varepsilon/\gamma)^2}.
\end{equation*}

Combining this with \eqref{eq:P_1-P_2} and \eqref{eq:P_2-bound}, we obtain
\[
\limsup_{n\to\infty}\mathbb{P}\left(\Delta_1(n)\le 0\right)
\le \frac{\mathbb{E}[M^2(\infty)]-1}{(1-2\theta\varepsilon/\gamma)^2}
+ 2\left(1-\Phi(A)\right).
\]
Letting first \(A\to\infty\) and then $\varepsilon\to0$,
we infer that
\begin{equation}\label{eq:Var(M_infty)}
	\limsup_{n\to\infty}\mathbb{P}\left(\Delta_1(n)\le 0\right)
	\le \mathbb{E}[M^2(\infty)]-1 .
\end{equation}
Combining this with Lemma~\ref{lem:var-asymp}, we finish the proof of the theorem.
\section{Linear in $q$ bound for uniform recursive trees.}

\label{sec:uniform-case}
In this section we consider uniform recursive trees and prove Corollary~\ref{cor:main}. Letting $\alpha=\beta=0$ in Theorem~\ref{thm:main}, we conclude that, for $q\in(0,1/4)$,
$$
R_{\mathrm{maj}}(q)\le \frac{\Gamma(2-2q)^2}{\Gamma(3-4q)}\cdot\frac{2-4q}{1-4q}-1.
$$
Define
\begin{equation*}
	f(q)=\frac{\Gamma(2-2q)^2}{\Gamma(3-4q)}\cdot\frac{2-4q}{1-4q}-1.
\end{equation*}
\begin{lema}\label{lem:F-8q}
	For every \(q\in[0,\tfrac18]\), we have
\begin{equation}\label{eq:F-linear-8}
	f(q)\le 7.25q. 
\end{equation}
\end{lema}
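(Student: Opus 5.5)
The plan is to prove that $f$ is convex on $[0,\tfrac14)$. Since $f(0)=0$, convexity puts $f$ below its chord on $[0,\tfrac18]$, so
$$
f(q)\le 8f(\tfrac18)\,q,\qquad q\in[0,\tfrac18].
$$
It then remains to evaluate $f(1/8)$ explicitly and check numerically that $8f(1/8)\le 7.25$. This is consistent with the constant $8\bigl(\frac{9\Gamma^2(3/4)}{4\sqrt\pi}-1\bigr)$ appearing in Corollary~\ref{cor:main}.

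\emph{Step 1: a convenient factorisation.} Using $\Gamma(4-4q)=(3-4q)\Gamma(3-4q)$, I would rewrite
$$
f(q)+1=B(2-2q,2-2q)\cdot r(q),\qquad r(q):=\frac{(3-4q)(2-4q)}{1-4q},
$$
where $B$ is the beta function.

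For the first factor, write
$$
B(2-2q,2-2q)=\int_0^1 \exp\bigl((1-2q)\log(t(1-t))\bigr)\,dt .
$$
Since $\log(t(1-t))<0$, differentiating under the integral shows that this function of $q$ is positive and increasing. Its second derivative is $\int 4\log^2(t(1-t))\,(t(1-t))^{1-2q}dt>0$, so it is also convex.

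For the second factor, set $u=1-4q\in(0,1]$. Then $r=u+3+2/u$. Viewed as a function of $q$, $2/u$ is convex and $u$ is affine, so $r$ is convex in $q$. Moreover $dr/du=1-2/u^2<0$ for $u\le1$, and $u$ decreases in $q$, so $r$ is increasing in $q$. It is also clearly positive.

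\emph{Step 2: convexity of the product.} For positive, increasing, convex functions $F$ and $G$,
$$
(FG)''=F''G+2F'G'+FG''\ge0 .
$$
Hence $f$ is convex on $[0,1/4)$. Together with $f(0)=\Gamma(2)^2\cdot 2/\Gamma(3)-1=0$, this gives the chord bound stated at the start.

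\emph{Step 3: evaluating $f(1/8)$.} Using $\Gamma(7/4)=\tfrac34\Gamma(3/4)$ and $\Gamma(5/2)=\tfrac{3\sqrt\pi}{4}$, the definition of $f$ gives
$$
f(1/8)=\frac{\Gamma(7/4)^2}{\Gamma(5/2)}\cdot\frac{3/2}{1/2}-1=\frac{9\Gamma^2(3/4)}{4\sqrt\pi}-1 .
$$
Numerically, $\Gamma(3/4)\approx1.225417$, so $9\Gamma^2(3/4)\approx13.5148$ and $4\sqrt\pi\approx7.0898$. This gives $f(1/8)\approx0.90623$, and hence $8f(1/8)\approx7.2498\le7.25$.

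The main obstacle is this last numerical step, because the margin is tiny (about $10^{-4}$). I would therefore state rigorous enclosures, for instance $\Gamma(3/4)<1.22542$ and $\sqrt\pi>1.77245$, and verify the inequality with them rather than relying on rounded decimals. The convexity argument itself is routine once the beta-function representation is in place. Note that the more naive route, convexity of $\log$ of the Gamma ratio, leads to an unclear comparison of trigamma values; that is why I would go through the beta function instead.
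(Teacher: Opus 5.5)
Your proof is correct. It reaches the same chord bound $f(q)\le 8f(\tfrac18)\,q$ and the same constant $8\bigl(\frac{9\Gamma^2(3/4)}{4\sqrt\pi}-1\bigr)$ as the paper, but it proves convexity differently.

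The paper substitutes $\gamma=1-2q$ and writes $1+f=\Gamma^2(\gamma+1)/\bigl(\Gamma(2\gamma)(2\gamma-1)\bigr)$. It then shows that the logarithm $G$ of this expression is convex for $\gamma\in[\tfrac34,1]$. To do so it bounds $G''(\gamma)=2\psi_1(\gamma+1)-4\psi_1(2\gamma)+4/(2\gamma-1)^2$ from below, using only that the trigamma function is decreasing and the values $\psi_1(2)=\pi^2/6-1$, $\psi_1(3/2)=\pi^2/2-4$. This gives $G''\ge 18-5\pi^2/3>0$. Your closing remark that the log-convexity route leads to an unclear trigamma comparison is therefore not accurate: on the restricted interval it closes in one line.

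Your factorisation $1+f=B(2-2q,2-2q)\cdot(u+3+2/u)$, with $u=1-4q$, instead writes $1+f$ as a product of two positive, increasing, convex functions. What this buys:
\begin{itemize}
\item It needs no special values of polygamma functions.
\item It gives convexity on the whole range $[0,\tfrac14)$, not just on $[0,\tfrac18]$.
\end{itemize}
The price is a routine dominated-convergence justification for differentiating the beta integral twice under the integral sign. This is harmless, since $(t(1-t))^{1-2q}\le 1$ there and $\log^2(t(1-t))$ is integrable.

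Your explicit enclosures $\Gamma(3/4)<1.22542$ and $\sqrt\pi>1.77245$ give $8f(\tfrac18)<7.24996$. This makes rigorous the final numerical step, which the paper only asserts.
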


\begin{proof}
	Recall that \(\gamma=1-2q\). Then the assumption $q\in[0,\tfrac18]$ is equivalent to \(\gamma\in[\tfrac34,1]\). Moreover,
	\begin{align}\label{eq:F-gamma}
    \nonumber
		f(q)&=\frac{\Gamma^2(\gamma+1)}  
            {\Gamma(2\gamma+1)}\cdot\frac{2\gamma}{2\gamma-1}-1\\
            &=\frac{\Gamma^2(\gamma+1)}  
            {\Gamma(2\gamma)}\cdot\frac{1}{2\gamma-1}-1=:F(\gamma).
	\end{align}
	We first show that $F''(\gamma)$ is positive on the interval
    $\left[\frac34,1\right]$. Set
    $$
    G(\gamma):=\log(1+F(\gamma))
    =2\log\Gamma(\gamma+1)-\log\Gamma(2\gamma)-\log(2\gamma-1).
    $$
    Therefore, 
	\begin{equation*}
		G'(\gamma)=2\psi(\gamma+1)-2\psi(2\gamma)
        -\frac{2}{2\gamma-1},
	\end{equation*}
	where \(\psi\) is the digamma function given by \(\psi(z)=\frac{d}{dz}\log(\Gamma(z))=\frac{\Gamma'(z)}{\Gamma(z)}\). 
	Then
	\begin{equation}\label{eq:G''-def}
		G''(\gamma)=2\psi_1(\gamma+1)-4\psi_1(2\gamma)
        +\frac{4}{(2\gamma-1)^2}, 
	\end{equation}
	where \(\psi_1\) is the trigamma function, i.e., \(\psi_1(z)=\psi'(z)\). Since the digamma function \(\psi\) is strictly concave on \((0,\infty)\), 
    \(\psi_1\) is decreasing in the same interval. Therefore,
    $\psi_1(\gamma+1)\ge\psi_1(2)$ and 
    $\psi_1(2\gamma)\le\psi_1(3/2)$. Plugging these bounds in
    \eqref{eq:G''-def} yields 
     $$
     G''(\gamma)\ge 2\psi_1(2)-4\psi_1(3/2)+4,\quad 
     \gamma\in [\tfrac34,1].
     $$
     Using the known values \(\psi_1(2)=\frac{\pi^2}{6}-1\) and \(\psi_1(\tfrac32)=\tfrac{1}{2}\pi^2-4\), we infer that
	\begin{align*}
		G''(\gamma)&\ge
        2\left(\tfrac16\pi^2-1\right)-4\left(\frac{1}{2}\pi^2-4\right)+4\\
		&=18-\frac{5\pi^2}{3}>0 \quad\text{for all }\gamma\in\left[\tfrac34,1\right].
	\end{align*}
	Therefore, \(G\) is convex. This implies that $F(\gamma)=e^{G(\gamma)}-1$ is convex as well and, consequently, $F''(\gamma)$ is positive on $[\tfrac34,1]$.
    Noting that \eqref{eq:F-gamma} yields $f''(q)=4F''(\gamma)$, we conclude that $f''(q)>0$
    for $q\in[0,\tfrac18]$. This implies that
    $$
    f(q)\le\frac{f(1/8)}{1/8}q=
    8\left(\frac{9\Gamma^2(3/4)}{4\sqrt{\pi}}-1\right)q,
    \quad q\in[0,\tfrac18].
    $$
    It remains to notice that the constant on the right hand side is bounded by 7.25.

	\end{proof}

	\bibliographystyle{amsalpha}
	\bibliography{paper_short}
	
\end{document}